\title{Packing parameters in graphs: New bounds and a solution to an open problem}
\author {
Doost Ali Mojdeh and Babak Samadi\thanks{Corresponding author}\\
Department of Mathematics\\
University of Mazandaran, Babolsar, Iran\\
{\tt damojdeh@umz.ac.ir}\\
{\tt samadibabak62@gmail.com$^*$}\vspace{3mm}\\
}
\date{}
\newtheorem{theorem}{Theorem}[section]
\newtheorem{lemma}[theorem]{Lemma}
\newtheorem{proposition}[theorem]{Proposition}
\newtheorem{obs}[theorem]{Observation}
\theoremstyle{definition}
\begin{document}

\maketitle

\begin{abstract}
\noindent In this paper, we investigate the packing parameters in graphs. By applying the Mantel's theorem, we give upper bounds on packing and open packing numbers of triangle-free graphs along with characterizing the graphs for which the equalities hold and exhibit sharp Nordhaus-Gaddum type inequalities for packing numbers. We also solve the open problem of characterizing all connected graphs with $\rho_{o}(G)=n-\omega(G)$ posed in [S. Hamid and S. Saravanakumar, {\em Packing parameters in graphs}, Discuss Math. Graph Theory, 35 (2015), 5--16].
\end{abstract}
{\bf Keywords:} Packing number, open packing number, Nordhaus-Gaddum inequality, open problem, triangle-free graph.\vspace{1mm}\\
{\bf AMS Subject Classifications:} 05C69.

\section{Introduction}
Throughout this paper, let $G$ be a finite graph with vertex set $V(G)$ and edge set $E(G)$. We use \cite{w} as a reference for terminology and notation which are not defined here. The {\em open neighborhood} of a vertex $v$ is denoted by $N(v)$, and the {\em closed neighborhood} of $v$ is $N[v]=N(v)\cup \{v\}$. The minimum and maximum degree of $G$ are denoted by $‎\delta(G)‎‎$ and $‎\Delta(G)‎‎$, respectively. The subset $S\subseteq V(G)$ is said to be {\em $2$-independent} if the maximum degree of the subgraph induced by it is less then two.\\
A set $S\subseteq V(G)$ is a {\em dominating set} if each vertex in $V(G)\backslash S$ has at least one neighbor in $S$. The {\em domination number} $\gamma(G)$ is the minimum cardinality of a dominating set \cite{hhs}.\\
A subset $B\subseteq V(G)$ is a {\em $2$-packing} in $G$ if for every pair of vertices $u,v\in B$, $N[u]\cap N[v]=\phi$. The {\em $2$-packing number} (or {\em packing number}) $\rho(G)$ is the maximum cardinality of a $2$-packing in $G$. The {\em open packing}, as it is defined in \cite{hs}, is a subset $B\subseteq V(G)$ for which the open neighborhoods of the vertices of $B$ are pairwise disjoint in $G$ (clearly, $B$ is an open packing if and only if $|N(v)\cap B|\leq1$, for all $v\in V(G)$. The {\em open packing number}, denoted $‎\rho‎_{o}(G)$, is the maximum cardinality among all open packings in $G$.\\
Gallant et al. \cite{gghr} introduced the concept of {\em limited packing} in graphs. They exhibited some real-world applications of it to network security, NIMBY, market saturation and codes. In fact as it is defined in \cite{gghr}, a set of vertices $B\subseteq V(G)$ is called a {\em $k$-limited packing} in $G$ provided that for all $v\in V(G)$, we have $|N[v]\cap B|\leq k$. The {\em limited packing number}, denoted $L_{k}(G)$, is the largest number of vertices in a $k$-limited packing set. It is easy to see that $L_{1}(G)=\rho(G)$. \\
In this paper, as an application of the classic theorem of Mantel \cite{m} we give upper bounds on packing and open packing numbers of triangle-free graphs and characterize the graphs obtaining equality in these bounds. In Section 3, we give lower bounds on $L_{k}(G)$, for $k=1,2$, in terms of $k$ and the diameter of $G$. Also, we prove sharp Nordhaus-Gaddum inequalities for packing numbers.\\
In \cite{hsa}, the problem of finding all connected graphs with $\rho_{o}(G)=n-\omega(G)$ was posed as an open problem. In Section 4, we exhibit a solution to this problem.


\section{Applications of Mantel's Theorem}

Our aim in this section is to establish upper bounds on $‎\rho‎_{o}(G)‎$ and $‎\rho‎(G)‎$ for a triangle-free graph $G$ in terms of its order and size. Furthermore, we characterize all triangle-free graphs attaining these bounds. We need the following well-known theorem of Mantel from extremal graph theory.
\begin{lemma}\label{L1}\emph{(\cite{m}) (Mantel's Theorem)}
If $G$ is a triangle-free graph of order $n$, then
$$|E(G)|‎\leq \lfloor n^{2}/4\rfloor$$
with equality if and only $G$ is isomorphic to $K_{\lfloor\frac{n}{2}\rfloor,\lceil\frac{n}{2}\rceil}$.
\end{lemma}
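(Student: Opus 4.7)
The plan is to prove Mantel's theorem by exploiting the fact that in a triangle-free graph the neighborhood of any vertex is an independent set. Let $v$ be a vertex of maximum degree $\Delta=\Delta(G)$. Since $G$ has no triangle, the set $N(v)$ induces no edges, so every edge of $G$ has at least one endpoint in $V(G)\setminus N(v)$. Each vertex in $V(G)\setminus N(v)$ has degree at most $\Delta$, hence
$$|E(G)| \;\leq\; \sum_{u\in V(G)\setminus N(v)} \deg(u) \;\leq\; \Delta\,(n-\Delta).$$
From the AM-GM inequality $\Delta(n-\Delta)\leq n^{2}/4$, and since $|E(G)|$ is an integer, we conclude $|E(G)|\leq \lfloor n^{2}/4\rfloor$.

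For the equality characterization, I would trace back through the two inequalities. Equality in $\Delta(n-\Delta)\leq \lfloor n^{2}/4\rfloor$ forces $\Delta\in\{\lfloor n/2\rfloor,\lceil n/2\rceil\}$. Equality in the edge-counting step requires two things: every vertex of $V(G)\setminus N(v)$ has degree exactly $\Delta$, and no edge is double-counted, i.e.\ there is no edge with both endpoints in $V(G)\setminus N(v)$. Combined with the fact that $N(v)$ is independent, this yields a bipartite graph with parts $N(v)$ and $V(G)\setminus N(v)$ in which every vertex of $V(G)\setminus N(v)$ has all of $N(v)$ as its neighborhood. That is precisely $K_{\lfloor n/2\rfloor,\lceil n/2\rceil}$. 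The converse — that the complete balanced bipartite graph attains the bound — is an immediate count.

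An alternative route would be induction on $n$: pick an edge $uv$, observe that $\deg(u)+\deg(v)\leq n$ since $N(u)\cap N(v)=\emptyset$ in a triangle-free graph and $N(u)\cup N(v)\subseteq V(G)$, then delete $\{u,v\}$ and apply the inductive hypothesis to $G-\{u,v\}$. The arithmetic $(n-2)^{2}/4+(n-1)\leq n^{2}/4$ closes the induction; the equality analysis then propagates structurally but is slightly more delicate to handle cleanly, which is why I prefer the direct max-degree argument above.

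The main obstacle is not the bound itself, which falls out quickly, but the equality characterization: one must argue that equality propagates to force full bipartite regularity rather than merely a subgraph of $K_{\lfloor n/2\rfloor,\lceil n/2\rceil}$. The key observation that rules out strict containment is that dropping any edge from the complete bipartite graph reduces the count below $\lfloor n^{2}/4\rfloor$, so the double inequality chain must be tight edge-by-edge, which I make rigorous via the degree-sum bound on $V(G)\setminus N(v)$ above.
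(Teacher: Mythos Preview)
Your proof is correct, but note that the paper does not actually prove this lemma: it is stated with a citation to Mantel's original 1907 paper and used as a black box. There is therefore no ``paper's own proof'' to compare against. What you have written is one of the standard arguments for Mantel's theorem (the maximum-degree neighborhood argument), and both the bound and the equality analysis are handled correctly; in particular, your tracing of equality through the two inequalities cleanly forces the complete balanced bipartite structure.
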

In order to characterize all triangle-free graphs attaining the upper bounds in the following theorem, we define the family $‎\Omega‎$ to consist of all triangle-free graphs $G$ that can be obtained from the disjoint union of a complete bipartite graph $K‎_{t,t}‎$ and $pK‎_{2}‎$ ($p\geq1$) by adding exactly one edge $uv$ such that $u\in V(K‎_{t,t})$ and $v\in V(pK‎_{2}‎)$, for every $u\in V(K‎_{t,t})$. Also, we define the collection $\Omega'$ by replacing $pK‎_{2}‎$ with $pK‎_{1}‎$ in the definition of $\Omega$.
\begin{theorem}
Let $G$ be a triangle-free graph of order $n$ and size $m$. If $G$ has no isolated vertex, then
$$‎\rho‎_{o}(G)‎\leq n+1-‎\sqrt{4m-2n+1}.‎‎‎‎$$
Moreover,
$$‎\rho‎(G)‎\leq n+2-‎2\sqrt{1+m}.‎‎‎‎$$
The first inequality holds with equality if and only if $G\in ‎\Omega‎‎$ and the second holds with equality if and only if $G\in \Omega'$.
\end{theorem}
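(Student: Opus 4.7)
The plan is to fix a maximum open packing (resp.\ $2$-packing) $B$ with $b=\rho_{o}(G)$ (resp.\ $b=\rho(G)$), partition the edges of $G$ into the three classes $e_{1}:=|E(G[B])|$, $e_{2}:=|E(B,V\setminus B)|$ and $e_{3}:=|E(G[V\setminus B])|$, and upper-bound each class using the defining property of $B$ together with Lemma~\ref{L1} on the triangle-free subgraph $G[V\setminus B]$.

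For an open packing, $|N(v)\cap B|\le 1$ for every $v\in V(G)$ yields $e_{1}\le b/2$ (because $G[B]$ has maximum degree at most $1$) and $e_{2}\le n-b$ (because each vertex of $V\setminus B$ contributes at most one edge to $B$). For a $2$-packing we additionally have $e_{1}=0$ since closed neighborhoods are disjoint, so $B$ is independent. In both cases $G[V\setminus B]$ is triangle-free of order $n-b$, hence $e_{3}\le (n-b)^{2}/4$ by Lemma~\ref{L1}. Substituting into $m=e_{1}+e_{2}+e_{3}$ gives
\[
4m\le (n-b)^{2}+4(n-b)+2b \qquad\text{and}\qquad 4m\le (n-b)^{2}+4(n-b),
\]
respectively. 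Setting $x=n-b$ and solving each of these quadratics in $x$ rearranges them into $b\le n+1-\sqrt{4m-2n+1}$ and $b\le n+2-2\sqrt{1+m}$, which are the two claimed inequalities.

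For the equality analysis, equality in the final bound forces equality in each of the three edge-counts. In the open packing case this means $e_{1}=b/2$ (so $b$ is even and $G[B]\cong (b/2)K_{2}$), $e_{2}=n-b$ (so every vertex of $V\setminus B$ has exactly one neighbor in $B$), and $e_{3}=(n-b)^{2}/4$ (so $n-b$ is even and, by the equality case of Lemma~\ref{L1}, $G[V\setminus B]\cong K_{(n-b)/2,(n-b)/2}$); this is precisely the description of a member of $\Omega$ with $p=b/2$ and $t=(n-b)/2$. The $2$-packing case is identical with $e_{1}=0$ automatic and $G[B]\cong bK_{1}$, placing $G$ in $\Omega'$. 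Conversely, for any $G\in\Omega$ the set $V(pK_{2})$ is an open packing of size $2p$, and for $G\in\Omega'$ the set $V(pK_{1})$ is a $2$-packing of size $p$; a short direct computation of $m$ in terms of $p$ and $t$ then verifies that the corresponding inequality holds with equality.

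The main obstacle I expect is the equality analysis: one must simultaneously exploit the combinatorial rigidity of $G[B]$, the equality case of Mantel's theorem for $G[V\setminus B]$, and the triangle-freeness of the whole graph to pin down the structure exactly as in $\Omega$ or $\Omega'$; extra care is needed to check that the required parities of $b$ and $n-b$ are forced (so that the discrete matching and Mantel bounds are genuinely attained) and that the resulting assignment of each vertex of $V\setminus B$ to a unique neighbor in $B$ is consistent with $G$ being triangle-free.
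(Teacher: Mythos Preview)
Your proposal is correct and follows essentially the same approach as the paper: both proofs partition the edges of $G$ according to whether their endpoints lie in a maximum (open) packing $B$ or in $V\setminus B$, bound the three resulting quantities using the definition of an (open) packing and Mantel's theorem on $G[V\setminus B]$, and then solve the resulting quadratic in $|B|$; the equality analysis in both cases proceeds by forcing equality in each of the three edge-counts and invoking the equality case of Lemma~\ref{L1}. Your additional remark about the parities of $b$ and $n-b$ being forced is a small clarification the paper leaves implicit, but otherwise the arguments coincide.
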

\begin{proof}
Let $B$ be a maximum open packing set in $G$. Then, $|E(G[B])|‎\leq|B|/2‎$ and $|[B,V‎\setminus B‎]|‎\leq n-|B|‎$. Since $G$ is triangle-free, $|E(G[V‎\setminus ‎B])|‎\leq(n-|B|)‎‎^{2}‎/4$, by Lemma \ref{L1}. Clearly,
\begin{equation}\label{EQ7}
m=|E(G[B])|+|[B,V‎\setminus B‎]|+|E(G[V‎\setminus ‎B])|.
\end{equation}
Therefore, $m‎\leq |B|‎/2+n-|B|+(n-|B|)‎‎^{2}‎/4$ and hence
\begin{equation*}
|B|‎/2+n-|B|+(n-|B|)‎‎^{2}‎/4-m‎\geq0.‎
\end{equation*}
Solving the above inequality for $|B|$ we obtain
\begin{equation}\label{EQ8}
‎\rho‎_{o}(G)‎=|B|\leq n+1-‎\sqrt{4m-2n+1}‎‎‎‎.
\end{equation}
The upper bound on $‎\rho(G)‎$ can be proved by a similar fashion. it suffices to note that $|E(G[B])|‎=0$ when $B$ is a packing set in $G$.\\
Now, we characterize all triangle-free graphs with no isolated vertices for which the equality in the first upper bound holds. By (\ref{EQ7}) we deduce that the inequality (\ref{EQ8}) holds with equality if and only if $|E(G[B])|‎=|B|/2‎$, $|[B,V‎\setminus B‎]|‎=n-|B|‎$ and $|E(G[V‎\setminus ‎B])|‎=(n-|B|)‎‎^{2}‎/4$.\\
Let $G\in ‎\Omega‎$ and $B$ be the set of vertices of $pK‎_{2}‎$. It is easy to see that $B$ is an open packing set satisfying the three above equality. So, $B$ is a maximum open packing in $G$ and $\rho_{o}(G)=|B|=n+1-‎\sqrt{4m-2n+1}$.\\
Conversely, suppose that $G$ satisfies the equality in (\ref{EQ8}) and $B$ is a maximum open packing set in $G$. Since $|E(G[B])|‎=|B|/2‎$, $|E(G[V‎\setminus ‎B])|‎=(n-|B|)‎‎^{2}‎/4$ and $G$ is triangle-free,  $G[V‎\setminus ‎B]$ is the complete bipartite graph $K‎_{\frac{n-|B|}{2},\frac{n-|B|}{2}}‎$, by Lemma \ref{L1}. Taking into account the facts that $B$ is an open packing and $|E(G[B])|‎=|B|/2‎$, we have $G[B]=(|B|/2)K‎_{2}‎$. On the other hand, the equality $|[B,V‎\setminus B‎]|‎=n-|B|‎$ implies that every vertex in $G[V‎\setminus B‎]$ has exactly one neighbor in $V(G[B])=V((|B|/2)K‎_{2}‎)$. This shows that $G\in ‎\Omega‎$.\\
It is easy to see that the second upper bound holds with equality for $G\in \Omega'‎‎‎$.\\
Conversely, suppose that $G$ satisfies the equality in the second upper bound and $B$ is a maximum packing set in $G$. Then $B$ is independent. Moreover, $|[B,V‎\setminus B‎]|‎=n-|B|‎$ and $|E(G[V‎\setminus ‎B])|‎=(n-|B|)‎‎^{2}‎/4$. Thus, every vertex in $V‎\setminus B‎$ has exactly one neighbor in $B$. Also, $|E(G[V‎\setminus ‎B])|‎=(n-|B|)‎‎^{2}‎/4$ shows that $G[V‎\setminus ‎B]=K‎_{\frac{n-|B|}{2},\frac{n-|B|}{2}}‎$, by Lemma \ref{L1}. Therefore, $G\in \Omega'$. This completes the proof.
\end{proof}


\section{Diameter and Nordhaus-Gaddum inequalities for packing number}

Many results in domination theory have relationship with the diameter of graphs (see \cite{hhs}). In this section we exhibit tight bounds on $L_{k}(G)$ ($k=1,2$) and the sum and product of the packing number $G$ and  $\overline G$ involving the diameter. The following well-known lower bound on the domination number for a connected graph $G$ was given in \cite{hhs}:
\begin{equation}\label{EQ11}
\gamma(G)\geq\lceil\frac{diam(G)+1}{3}\rceil.
\end{equation}
In the next result we bound the $k$-limited packing numbers, $k\in\{1,2\}$, of a general connected graph $G$ from below just in terms of $k$ and its diameter. Since $\rho(G)\leq \gamma(G)$ (see \cite{gghr}), it improves the lower bound (\ref{EQ11}) for the case $k=1$.
\begin{proposition}
For any connected graph $G$ and integer $k\in\{1,2\}$,
$$\lceil\frac{k\ diam(G)+k}{3}\rceil\leq L_{k}(G).$$
\end{proposition}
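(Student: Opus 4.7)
The plan is to construct, for each value of $k$, an explicit $k$-limited packing in $G$ of size $\lceil(k\,diam(G)+k)/3\rceil$ by selecting suitably spaced vertices along a diametrical shortest path. Let $d=diam(G)$ and fix a shortest path $P:v_{0}v_{1}\cdots v_{d}$ joining two vertices at distance $d$ in $G$. The preliminary fact to establish first is that $d_{G}(v_{i},v_{j})=|i-j|$ for all $0\le i,j\le d$: the subpath along $P$ gives the upper bound, and the triangle inequality applied to $d=d_{G}(v_{0},v_{d})\le i+d_{G}(v_{i},v_{j})+(d-j)$ (for $i<j$) yields the matching lower bound.

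The consequence that powers both cases is the following window observation: if $u\in V(G)$ and $v_{i},v_{j}\in N[u]$ with $i\le j$, then $j-i=d_{G}(v_{i},v_{j})\le d_{G}(v_{i},u)+d_{G}(u,v_{j})\le 2$. Thus the indices of path-vertices that lie in any single closed neighborhood of $G$ occupy a window of length at most $2$.

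For $k=1$ I would take $B_{1}=\{v_{3t}:0\le t\le\lfloor d/3\rfloor\}$. Distinct elements satisfy $d_{G}(v_{3t},v_{3t'})=3|t-t'|\ge 3$, which is precisely the $2$-packing condition, and $|B_{1}|=\lfloor d/3\rfloor+1=\lceil(d+1)/3\rceil$. For $k=2$ I would take $B_{2}=\{v_{i}:0\le i\le d,\ i\not\equiv 2\pmod{3}\}$; a direct count of residues modulo $3$ in $\{0,1,\dots,d\}$ gives $|B_{2}|=\lceil(2d+2)/3\rceil$. To verify that $B_{2}$ is a $2$-limited packing, I would suppose some $u\in V(G)$ has three elements of $B_{2}$ in $N[u]$; the window observation forces their indices to be exactly $\{i,i+1,i+2\}$ for some $i$, but this triple hits every residue class modulo $3$, contradicting the fact that none of the chosen indices is $\equiv 2\pmod{3}$.

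The main obstacle, and essentially the only nontrivial point, is verifying the $k$-limited packing condition globally, i.e.\ for every $u\in V(G)$ and not merely for vertices lying on the path $P$. The distance identity $d_{G}(v_{i},v_{j})=|i-j|$ is what converts this global requirement into the elementary arithmetic window condition exploited above; once that identity is in place, the rest of the argument is just bookkeeping on the indices, and the claimed bound drops out for both $k=1$ and $k=2$.
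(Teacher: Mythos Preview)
Your argument is correct and follows essentially the same approach as the paper: both proofs select every third vertex along a diametral path for $k=1$ and delete every third vertex for $k=2$, with the key point being that a shortest diametral path cannot be shortcut through any vertex of $G$. Your presentation is somewhat more explicit, isolating the distance identity $d_G(v_i,v_j)=|i-j|$ and the ``window'' observation, whereas the paper argues more briefly by contradiction that a common neighbor of two far-apart path vertices would yield a shorter $v_0$--$v_d$ walk.
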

\begin{proof}
Let $P$ be a diametral path in $G$ with the set of vertices $V(P)=\{v_{1},...,v_{diam(G)+1}\}$.\\
For $k=1$, $V_{1}(P)=\{v_{1},...,v_{3i+1},...,v_{3\lfloor\frac{diam(G)}{3}\rfloor+1}\}$ is a packing in $G$. For otherwise, there exists a vertex $v$ adjacent to at least two vertices in $V_{1}(G)$. This yields to a path between $v_{1}$ and $v_{diam(G)+1}$ by $v$ with length less than $diam(G)$, a contradiction. So, $\rho(G)\geq|V_{1}(P)|=\lceil\frac{diam(G)+1}{3}\rceil$.\\
For $k=2$, $V_{2}(P)=V(P)\setminus\{v_{3},...,v_{3\lfloor\frac{diam(G)+1}{3}\rfloor}\}$ is a $2$-limited packing in $G$, by a similar fashion. Therefore, $L_{2}(G)\geq|V_{2}(P)|=\lceil\frac{2\ diam(G)+2}{3}\rceil$.
\end{proof}
Nordhaus and Gaddum in 1956, gave lower and upper bounds on the sum and product of the chromatic number, in terms of the order. Since then, bounds on $‎\psi(G)+‎\psi(\overline{G})‎‎$ or $\psi(G)\psi(\overline{G})‎‎$ are called Nordhaus-Gaddum inequalities, where $‎\psi‎$ is a graph parameter. For more information about this subject the reader can consult \cite{ah}.\\
The Nordhaus-Gaddum inequalities for limited packing parameters was initiated by exhibiting the sharp upper bound $L‎_{2}(G)+L‎_{2}(\overline{G})‎‎\leq n+2‎$, for $k=2$, in \cite{s}.\\
We conclude this section by establishing upper bounds on the sum and product of the packing number ($k=1$). We first need the following useful observation.
\begin{obs}\label{Ob1}
For any graph $G$, $\rho(G)=1$ if and only if $diam(G)\leq2$.
\end{obs}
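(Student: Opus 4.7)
The plan is to prove this via a direct translation between the packing condition and distance. The key bridge is the elementary fact that, for two distinct vertices $u,v$ in a graph, $N[u]\cap N[v]=\emptyset$ holds if and only if $d(u,v)\geq 3$: a common element $w$ of the closed neighborhoods would witness a path of length at most $2$ between $u$ and $v$ (either $w=u$, $w=v$, $u$-$w$-$v$, or $uv$ itself), and conversely any $u$-$v$-path of length at most $2$ produces such a common element.

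First I would dispatch the forward direction. Assume $diam(G)\leq 2$ and pick any two distinct vertices $u,v$; since $d(u,v)\leq 2$, the observation above gives $N[u]\cap N[v]\neq\emptyset$, so no two-element subset can be a $2$-packing. Hence $\rho(G)\leq 1$. Since every single vertex trivially forms a $2$-packing (there are no pairs to check), we also have $\rho(G)\geq 1$, and therefore $\rho(G)=1$.

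For the converse, I would argue by contrapositive. If $diam(G)\geq 3$, then there exist vertices $u,v$ with $d(u,v)\geq 3$, and the same observation yields $N[u]\cap N[v]=\emptyset$. Thus $\{u,v\}$ is a $2$-packing, so $\rho(G)\geq 2\neq 1$. Combining both directions gives the equivalence.

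There is essentially no technical obstacle here; the only point that requires a moment of care is the trivial lower bound $\rho(G)\geq 1$ (which needs $V(G)\neq\emptyset$, implicitly assumed throughout the paper) and the clean characterization of $N[u]\cap N[v]=\emptyset$ in terms of distance, which is where the whole argument lives.
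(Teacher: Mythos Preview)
Your proof is correct. In the paper this statement is recorded as an observation without proof, so there is nothing to compare against; your argument via the equivalence $N[u]\cap N[v]=\emptyset \Leftrightarrow d(u,v)\geq 3$ is exactly the natural justification the authors leave implicit.
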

Clearly $\rho(G)+\rho(\overline G)=2$ ($\rho(G)\rho(\overline G)=1$) if and only if $diam(G)$ and $diam(\overline G)\leq2$, by Observation \ref{Ob1}. Thus, we restrict our attention to the case max$\{diam(G),diam(\overline G)\}\geq3$.
\begin{theorem}
Let $G$ and $\overline{G}$ be both connected with $\Delta'=$min$\{\Delta(G),\Delta(\overline{G})\}$ and $M=$max$\{diam(G),diam(\overline{G})\}\geq3$. Then,
$$\rho(G)+\rho(\overline G)=\rho(G)\rho(\overline G)=4\  \mbox{if}\ \ diam(G)=diam(\overline{G})=3,$$
If $diam(G)\neq diam(\overline G)$, then
$$\rho(G)+\rho(\overline G)\leq n-\lceil\frac{2M+3\Delta'-11}{3}\rceil\ and\ \rho(G)\rho(\overline G)\leq n-\lceil\frac{2M+3\Delta'-8}{3}\rceil.$$
Furthermore, these bounds are sharp.
\end{theorem}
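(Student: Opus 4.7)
The plan is to split on the relationship between $diam(G)$ and $diam(\overline G)$ and, in the unequal-diameter case, to reduce both Nordhaus--Gaddum inequalities to a single upper bound on $\rho(G)$.

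I would first handle $diam(G)=diam(\overline G)=3$. Observation \ref{Ob1} gives $\rho(G),\rho(\overline G)\geq 2$, so the identity $\rho(G)+\rho(\overline G)=\rho(G)\rho(\overline G)=4$ follows once I show that $\rho(G)\geq 3$ forces $diam(\overline G)\leq 2$. From a $2$-packing $\{u_1,u_2,u_3\}$ in $G$, the pairwise-distance condition $\geq 3$ forces $|N[x]\cap\{u_1,u_2,u_3\}|\leq 1$ for every vertex $x$. For any two vertices $x,y$ we thus get $|(N[x]\cup N[y])\cap\{u_1,u_2,u_3\}|\leq 2$, so some $u_i$ is a common non-neighbor of $x,y$ in $G$ (and distinct from them), yielding an $\overline G$-walk $xu_iy$ of length $2$. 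Hence $d_{\overline G}(x,y)\leq 2$, contradicting $diam(\overline G)=3$; so $\rho(G)\leq 2$, and symmetrically $\rho(\overline G)\leq 2$.

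In the case $diam(G)\neq diam(\overline G)$, assume WLOG $M=diam(G)>diam(\overline G)$. The standard partition of $V(G)$ around a pair $u,v$ with $d_G(u,v)=4$ (their $G$-neighborhoods are disjoint and there are no $G$-edges between them) yields $diam(G)\geq 4\Rightarrow diam(\overline G)\leq 2$, and when $M=3$ the strict inequality $diam(\overline G)<3$ gives $diam(\overline G)\leq 2$ directly. In either subcase $\rho(\overline G)=1$ by Observation \ref{Ob1}. Since $\lceil(2M+3\Delta'-8)/3\rceil=\lceil(2M+3\Delta'-11)/3\rceil+1$, both target inequalities reduce to the single bound
\[
\rho(G)\leq n-\Delta'-\lceil(2M-8)/3\rceil.
\]

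For this bound, let $B$ be a maximum packing in $G$, choose $v_\star$ with $\deg_G(v_\star)=\Delta(G)\geq\Delta'$, and let $P=v_0v_1\cdots v_M$ be a diametral path; since every subpath of a geodesic is geodesic, $d_G(v_i,v_j)=|i-j|$, so $|B\cap V(P)|\leq\lceil(M+1)/3\rceil$, while $|B\cap N[v_\star]|\leq 1$ by the packing property, and a shortcut argument ($v_\star$ adjacent to $v_i,v_j$ with $j-i\geq 3$ would shorten $P$) gives $k:=|N[v_\star]\cap V(P)|\leq 3$. I then estimate $|(V(P)\cup N[v_\star])\setminus B|$ in two subcases. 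If $B\cap N[v_\star]\cap V(P)\neq\emptyset$, then $B\cap N[v_\star]\subseteq V(P)$ and the two counts merge, giving at least $M+\Delta(G)+2-k-\lceil(M+1)/3\rceil$. Otherwise $B\cap V(P)\subseteq V(P)\setminus N[v_\star]$, which is a disjoint union of two subpaths of sizes $a,b$ with $a+b=M+1-k$, so the refinement $\lceil a/3\rceil+\lceil b/3\rceil\leq\lceil(a+b+2)/3\rceil$ (checked by residues modulo $3$) gives at least $M+\Delta(G)+1-k-\lceil(M+3-k)/3\rceil$. Taking $k=3$ as the worst case and applying the identity $M-2-\lceil M/3\rceil=\lceil(2M-8)/3\rceil$ shows $|V\setminus B|\geq\Delta'+\lceil(2M-8)/3\rceil$ in either subcase, which rearranges to the displayed bound.

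The main obstacle is the second subcase when $M\equiv 0\pmod{3}$: the naive bound $|B\cap V(P)|\leq\lceil(M+1)/3\rceil$ overcounts by one, and only the two-subpath refinement recovers the needed unit. Sharpness of both inequalities is witnessed by $G=P_n$ with $n\equiv 1\pmod{3}$ and $n\geq 10$: here $\Delta'=2$, $M=n-1$, $\rho(\overline{P_n})=1$, and $\rho(P_n)=\lceil n/3\rceil$, making every estimate in the chain tight.
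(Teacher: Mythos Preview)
Your argument is correct and follows the paper's overall architecture: split on whether the two diameters coincide, reduce the unequal case to $\rho(\overline G)=1$ together with a single upper bound on $\rho(G)$, and derive that bound by counting non-packing vertices inside $N[v_\star]\cup V(P)$ for a diametral path $P$. The differences are in execution. For $diam(G)=diam(\overline G)=3$ the paper shows $\rho(\overline G)\le 2$ via $\rho\le\gamma$ and the observation that the endpoints of a $G$-diametral path dominate $\overline G$; you instead argue directly that any $2$-packing of size three in $G$ forces $diam(\overline G)\le 2$, which is a cleaner self-contained route. For the main bound the paper removes $N[u]$ from $P$ at once and works with the floors $\lfloor 2|V(P_x)|/3\rfloor+\lfloor 2|V(P_y)|/3\rfloor$, whereas you split according to whether $B\cap N[v_\star]$ meets $V(P)$ and use the ceiling inequality $\lceil a/3\rceil+\lceil b/3\rceil\le\lceil(a+b+2)/3\rceil$; the two bookkeepings are equivalent. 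For sharpness the paper builds a family by attaching two paths to a modified star; your choice $G=P_n$ with $n\equiv 1\pmod 3$ and $n\ge 10$ is a simpler witness that also makes all inequalities tight. One minor caveat (shared with the paper): the claim that $V(P)\setminus N[v_\star]$ is a union of at most two subpaths can fail when $v_\star\notin V(P)$ is adjacent to $v_i$ and $v_{i+2}$ but not $v_{i+1}$; however this $k=2$ three-piece configuration still yields $|(V(P)\cup N[v_\star])\setminus B|\ge M+\Delta(G)-2-\lceil M/3\rceil$, which coincides with your worst case $k=3$, so the conclusion is unaffected.
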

\begin{proof}
Let $diam(G)=diam(\overline{G})=3$ and $u$ and $v$ be the end vertices of a diametral path of length $3$. It is easy to see that $\{u,v\}$ is a dominating set in $\overline G$. Therefore, $\gamma(\overline G)\leq2$. On the other hand, $\rho(\overline G)\leq \gamma(\overline G)$. Now Observation \ref{Ob1} implies that $\rho(\overline G)=2$. A similar argument shows that $\rho(G)=2$.\\
Now let $diam(G)\neq diam(\overline{G})$. Without loss of generality we may assume that $diam(G)\geq diam(\overline G)$. Since $diam(G)\geq3$ implies $diam(\overline G)\leq3$ (see \cite{w}), we have $diam(G)\geq3$ and $diam(\overline G)\leq2$. Thus, $\rho(\overline G)=1$. Now let $B$ be a maximum packing in $G$ and $u$ be a vertex of the maximum degree. Then, at most one of the vertices in $N[u]$ belongs to $B$. Let $x$ and $y$ be the end vertices of a diametral path $P$ of the length $\ell(P)=diam(G)\geq3$ in $G$. Since $diam(G[N[u]])\leq2$, at least one of the end vertices, say $x$, is in $G\setminus N[u]$ and at most three vertices of $P$ are in $N[u]$. Then $H=P\setminus N[u]$ is disjoint union of two subpaths $P_{x}$ and $P_{y}$ of $P$ beginning at $x$ and $y$, respectively, if $y\notin N[u]$ and $H=P_{x}$ if $y\in N[u]$. Moreover, $|V(H)|=|V(P_{x})|+|V(P_{y})|\geq diam(G)-2$. Since $\rho(P_{m})=\lceil\frac{m}{3}\rceil$ (see \cite{gghr}), at most $\lceil|V(P_{x})|/3\rceil+\lceil|V(P_{y})|/3\rceil$ vetrices of $H$ belong to $B$ and therefore at least $\lfloor2|V(P_{x})|/3\rfloor+\lfloor2|V(P_{y})|/3\rfloor$ vertices of $H$ belong to $V(G)\setminus B$. Thus,
\begin{equation*}
\begin{array}{lcl}
|V(G)\setminus B|&\geq& \Delta(G)+\lfloor2|V(P_{x})|/3\rfloor+\lfloor2|V(P_{y})|/3\rfloor\\
&=& \Delta(G)+\lceil(2|V(P_{x})|-2)/3\rceil+\lceil(2|V(P_{y})|-2)/3\rceil\\
&\geq& \Delta(G)+\lceil2(|V(P_{x})|+|V(P_{y})|-2)/3\rceil\\
&\geq& \Delta(G)+\lceil(2diam(G)-8)/3\rceil.
\end{array}
\end{equation*}
So, $\rho(G)=|B|\leq n-\lceil\frac{2diam(G)+3\Delta(G)-8}{3}\rceil$. This implies the upper bounds.\\
That these bounds are sharp, may be seen as follows. Let $G$ be a graph obtained from the star $K_{1,t}$, $t\geq3$, with the central vertex $u$ by adding new edges among the pendant vertices of $K_{1,t}$ provided that there exist two nonadjacent vertices $u_{1}$ and $u_{2}$ in $N(u)$ and a vetrex $w\in N(u)$ which is adjacent neither $u_{1}$ nor $u_{2}$. We add two vertices $x$ and $y$ ($x\notin N[u]$ and $y\neq u$) and consider two paths $P_{x}$ and $P_{y}$ as above, with $\ell(P_{x})\geq\ell(P_{y})$ and $\ell(P_{x})\equiv0$ (mod 3), for which the other end vertices of them are adjacent to $u_{1}$ and $u_{2}$ (if $\ell(P_{y})\geq1$), respectively.  Show this graph by $H$. Then $\Delta(G)=\Delta(H)$, $d(x,y)=diam(H)$ and the three vertices $u$, $u_{1}$ and $u_{2}$ of the diametral path belong to $N[u]$. It is easy to see that the maximum packing $B$ of $H$ contains one vertex of $N[u]$, say $w$, and $\lceil|V(P_{x})|/3\rceil+\lceil|V(P_{y})|/3\rceil$ vertices of $V(P_{x})\cup V(P_{y})$. So,
\begin{equation}\label{EQ12}
|V(H)\setminus B|=\Delta(H)+\lceil(2|V(P_{x})|-2)/3\rceil+\lceil(2|V(P_{y})|-2)/3\rceil.
\end{equation}
Moreover, since $\ell(P_{x})\equiv0$ (mod 3) and three vertices of the $x,y$-path belong to $N[u]$, we have
\begin{equation*}
\begin{array}{lcl}
|V(H)\setminus B|&=&\Delta(H)+\lceil2(|V(P_{x})|+|V(P_{y})|-2)/3\rceil\\
&=&\Delta(H)+\lceil(2diam(H)-8)/3\rceil,
\end{array}
\end{equation*}
by (\ref{EQ12}). Hence, $|B|=n-\lceil\frac{2diam(H)+3\Delta(H)-8}{3}\rceil$. Taking into account this, the sharpness of the upper bounds follows from $\rho(\overline H)=1$.
\end{proof}


\section{Characterization of graphs with $\rho_{o}(G)=n-\omega(G)$}

Hamid and Saravanakumar \cite{hsa} posed the following open problem:\vspace{1mm}\\
{\em Characterize the connected graphs of order $n\geq3$ for which $\rho_{o}(G)=n-\omega(G)$,
where $\omega(G)$ denotes the clique number of $G$.}\vspace{1mm}\\
We conclude the paper by exhibiting a solution to this problem. For this purpose, we let $\Pi_{1}$ to be $\{P_{4},P_{5},P_{6},C_{4}\}$ for $\omega(G)=2$, and for $\omega(G)\geq3$ we define $\Pi_{2}$ to be the union of all families of connected graphs described as follows (the figures (a)-(j) depict examples of graphs in the families (a)-(j)). In each case, we let $S$ to be a maximum clique.\vspace{1mm}\\
(a)\ All graphs $G$ with $\delta(G)=1$ and $\Delta(G)=n-1=\omega(G)+1$;\\
(b)\ all graphs $G$ for which the subgraph induced by $V(G)\setminus S$ is $2$-independent and each vertex in $S$ has at most one neighbor in $V(G)\setminus S$;\\
In remaining cases each vertex in $S$ has at most one neighbor in $V(G)\setminus S$.\\
(c)\ All graphs $G$ formed from adding a new vertex $y$ and joining it to at least two vertices in $S$;\\
\begin{picture}(69.518,58.518)(0,0)
\put(48,26){\circle{38}}
\put(67,22){\circle*{4}}
\put(97,22){\circle*{4}}
\put(95,42){\circle*{4}}
\put(63,40){\circle*{4}}
\put(54,46){\circle*{4}}

\multiput(67,22)(.045,0){670}{\line(2,0){.9}}
\multiput(67,22)(.042,.032){670}{\line(2,0){.9}}
\multiput(65,40)(.042,.004){670}{\line(2,0){.9}}
\multiput(54,46)(.06,-.007){670}{\line(2,0){.9}}

\put(45,23){$S$}
\put(43,-8){(a)}
\end{picture}\\
\begin{picture}(69.518,58.518)(0,0)
\put(156,85){\circle{38}}
\put(175,78){\circle*{4}}
\put(190,78){\circle*{4}}
\put(190,90){\circle*{4}}
\put(172,97){\circle*{4}}
\put(184,109){\circle*{4}}
\put(140,97){\circle*{4}}
\put(124,101){\circle*{4}}
\put(140,115){\circle*{4}}
\put(136.5,80){\circle*{4}}

\multiput(175,78)(.02,0){670}{\line(2,0){.9}}
\multiput(190,78)(-.001,.02){670}{\line(2,0){.9}}
\multiput(172,97)(.018,.02){670}{\line(2,0){.9}}
\multiput(140,97)(-.023,.005){670}{\line(2,0){.9}}
\multiput(124,101)(.02,.02){670}{\line(2,0){.9}}
\multiput(124,101)(.016,-.029){670}{\line(2,0){.9}}

\put(153,82){$S$}
\put(151,51){(b)}
\end{picture}\\
\begin{picture}(69.518,38.518)(0,0)
\put(240,125){\circle{38}}
\put(257,114){\circle*{4}}
\put(257,135){\circle*{4}}
\put(260,124){\circle*{4}}
\put(286,123){\circle*{4}}

\multiput(286,123)(-.04,.003){670}{\line(2,0){.9}}
\multiput(286,123)(-.045,.02){670}{\line(2,0){.9}}
\multiput(286,123)(-.045,-.014){670}{\line(2,0){.9}}

\put(236,122){$S$}
\put(286,116.5){$y$}
\put(236,90){(c)}
\end{picture}\vspace{-28mm}\\
(d)\ all graphs $G$ formed from adding two new vertices $y$ and $z$ with $N(y)\subseteq S\setminus\{x\}$ and $N(z)=\{x\}$;\\
(e)\ all graphs $G$ obtained by adding two new vertices $y$ and $z$ for which $xy\notin E(G)$, $yz\in E(G)$ and $N(z)\cap S=\phi$;\\
(f)\ the family of graphs $G\cup\{xz\}$ in which $G$ is a graph described in (e);\\
\begin{picture}(69.518,58.518)(0,0)
\put(47,29){\circle{38}}
\put(66,22){\circle*{4}}
\put(94,22){\circle*{4}}
\put(91,42){\circle*{4}}
\put(58,12){\circle*{4}}
\put(61,44){\circle*{4}}

\multiput(66,22)(.039,.032){670}{\line(2,0){.9}}
\multiput(94,22)(-.053,-.014){670}{\line(2,0){.9}}
\multiput(94,22)(-.053,.035){670}{\line(2,0){.9}}

\put(43,26){$S$}
\put(67.5,28){$x$}
\put(95,17){$y$}
\put(93,37){$z$}
\put(42,-6){(d)}
\end{picture}\\
\begin{picture}(69.518,58.518)(0,0)
\put(151,87){\circle{38}}
\put(170,80){\circle*{4}}
\put(195,80){\circle*{4}}
\put(170,92){\circle*{4}}
\put(162,104){\circle*{4}}
\put(195,104){\circle*{4}}

\multiput(195,80)(-.05,.038){670}{\line(2,0){.9}}
\multiput(195,80)(-.036,.019){670}{\line(2,0){.9}}
\multiput(195,104)(0,-.033){670}{\line(2,0){.9}}

\put(147,84){$S$}
\put(170,72){$x$}
\put(198,73){$y$}
\put(198,103){$z$}
\put(145,53){(e)}
\end{picture}\\
\begin{picture}(69.518,58.518)(0,0)
\put(252,145){\circle{38}}
\put(272,138){\circle*{4}}
\put(296,138){\circle*{4}}
\put(271,150){\circle*{4}}
\put(263,162){\circle*{4}}
\put(296,162){\circle*{4}}

\multiput(296,138)(-.05,.038){670}{\line(2,0){.9}}
\multiput(296,138)(-.036,.019){670}{\line(2,0){.9}}
\multiput(296,162)(0,-.033){670}{\line(2,0){.9}}
\multiput(271,138)(.037,.037){670}{\line(2,0){.9}}

\put(248,142){$S$}
\put(271,131){$x$}
\put(299,131){$y$}
\put(299,161){$z$}
\put(246,112){(f)}
\end{picture}\vspace{-37mm}\\
(g)\ the family of graphs formed from adding a new vertex $t$ to a graph described in (e) and joining it to $x$;\\
(h) all graphs $G$ obtained by adding new vertices $y,z$ and $w$ for which $xy\notin E(G)$, $yz\in E(G)$ and $N(\{z,w\})\cap S=\phi$;\\
(i)\ the family of graphs formed from adding a new vertex $t$ to a graph described in (h) and joining it to $x$;\\
(j)\ all graphs obtained by adding two new vertices $y$ and $z$ for which $yz,xz\in E(G)$ and $N(y)\cap S=\phi$.\\
\begin{picture}(69.518,58.518)(0,0)
\put(23,27){\circle{38}}
\put(42,20){\circle*{4}}
\put(67,20){\circle*{4}}
\put(42,32){\circle*{4}}
\put(34,44){\circle*{4}}
\put(67,44){\circle*{4}}
\put(62,8){\circle*{4}}

\multiput(67,20)(-.05,.038){670}{\line(2,0){.9}}
\multiput(67,20)(-.036,.019){670}{\line(2,0){.9}}
\multiput(67,44)(0,-.033){670}{\line(2,0){.9}}
\multiput(62,8)(-.03,.017){670}{\line(2,0){.9}}

\put(19,24){$S$}
\put(39,10){$x$}
\put(70,13){$y$}
\put(70,43){$z$}
\put(64,2){$t$}
\put(18,-7){(g)}
\end{picture}
\begin{picture}(69.518,58.518)(0,0)
\put(33,26){\circle{38}}
\put(52,19){\circle*{4}}
\put(77,19){\circle*{4}}
\put(52,31){\circle*{4}}
\put(44,43){\circle*{4}}
\put(77,43){\circle*{4}}
\put(62,51){\circle*{4}}

\multiput(77,19)(-.05,.038){670}{\line(2,0){.9}}
\multiput(77,19)(-.036,.019){670}{\line(2,0){.9}}
\multiput(77,43)(0,-.033){670}{\line(2,0){.9}}
\multiput(77,43)(-.02,.012){670}{\line(2,0){.9}}

\put(29,23){$S$}
\put(53,13){$x$}
\put(80,22){$y$}
\put(80,42){$z$}
\put(52,51){$w$}
\put(28,-8){(h)}
\end{picture}
\begin{picture}(69.518,58.518)(0,0)
\put(44,26){\circle{38}}
\put(63,19){\circle*{4}}
\put(88,19){\circle*{4}}
\put(63,31){\circle*{4}}
\put(55,43){\circle*{4}}
\put(88,43){\circle*{4}}
\put(73,51){\circle*{4}}
\put(83,7){\circle*{4}}

\multiput(88,19)(-.05,.038){670}{\line(2,0){.9}}
\multiput(88,19)(-.036,.019){670}{\line(2,0){.9}}
\multiput(88,43)(0,-.033){670}{\line(2,0){.9}}
\multiput(88,43)(-.02,.012){670}{\line(2,0){.9}}
\multiput(83,7)(-.03,.017){670}{\line(2,0){.9}}

\put(40,23){$S$}
\put(60,9){$x$}
\put(91,12){$y$}
\put(91,43){$z$}
\put(63,51){$w$}
\put(85,0){$t$}
\put(39,-7){(i)}
\end{picture}
\begin{picture}(69.518,58.518)(0,0)
\put(53,25){\circle{38}}
\put(72,18){\circle*{4}}
\put(97,18){\circle*{4}}
\put(97,42){\circle*{4}}

\multiput(97,18)(0,.038){670}{\line(2,0){.9}}
\multiput(97,42)(-.038,-.036){670}{\line(2,0){.9}}

\put(49,22){$S$}
\put(72,10){$x$}
\put(100,11){$y$}
\put(100,41){$z$}
\put(48,-8){(j)}
\end{picture}\vspace{4mm}\\
We first need the following useful lemma.
\begin{lemma}\label{Lem}
Let $G$ be a connected graph of order $n$. Then, $\rho_{o}(G)\leq n-\Delta(G)+1$. Moreover, the equality holds if and only if $\Delta(G)=n-1$ and $\delta(G)=1$.
\end{lemma}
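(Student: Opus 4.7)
The plan is to fix a maximum-degree vertex $v$, read off the upper bound from the two pieces into which its neighborhood partitions $V(G)$, and then force the equality conditions by examining which open-packing constraints must be simultaneously saturated.

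For the inequality, I would pick $v$ with $\deg(v)=\Delta(G)$ and split $V(G)=N(v)\sqcup\bigl(V(G)\setminus N(v)\bigr)$. For an arbitrary open packing $B$, the defining condition applied at $v$ gives $|B\cap N(v)|\le 1$, while trivially $|B\cap(V(G)\setminus N(v))|\le n-\Delta(G)$; adding these yields $\rho_o(G)\le n-\Delta(G)+1$.

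Now assume $\rho_o(G)=n-\Delta(G)+1$ and let $B$ be a maximum open packing. Both bounds above must be tight, so $V(G)\setminus N(v)\subseteq B$ (in particular $v\in B$, since $v\notin N(v)$) and there is a unique vertex $w\in B\cap N(v)$. The key step is to apply the open-packing condition at $w$ and at every other $u\in N(v)\setminus\{w\}$: each such vertex has $v\in B$ as a neighbor, so it admits no further neighbor in $B$. Because $B$ contains every vertex of $V(G)\setminus N[v]$ as well as $w$, this forces no vertex of $N(v)$ to have any neighbor in $V(G)\setminus N[v]$. Connectedness of $G$ then rules out $V(G)\setminus N[v]$ being nonempty, giving $\Delta(G)=n-1$. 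Consequently $|B|=2$ and $B=\{v,w\}$; but then the open-packing condition at any $u\notin B$ (which has $v$ as a neighbor because $v$ is universal) demands $w\notin N(u)$, whence $N(w)=\{v\}$ and $\delta(G)=1$.

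For the converse, take a universal vertex $v$ and a leaf $w$, guaranteed by $\Delta(G)=n-1$ and $\delta(G)=1$; since $v$ is the unique neighbor of $w$, a routine check of $|N(u)\cap\{v,w\}|$ at each $u\in V(G)$ confirms that $\{v,w\}$ is an open packing of size $2=n-\Delta(G)+1$. The main obstacle in the whole argument is the combinatorial step in the necessity direction, where one must invoke the open-packing constraint at every vertex of $N(v)$ at once, using the fact that equality automatically places $v$ together with all of $V(G)\setminus N[v]$ into $B$; only by chaining these constraints across all of $N(v)$ does one eliminate the possibility of an edge between $N(v)$ and $V(G)\setminus N[v]$, which is what lets connectedness collapse $\Delta(G)$ all the way up to $n-1$.
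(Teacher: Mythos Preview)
Your proof is correct and follows essentially the same approach as the paper: both fix a maximum-degree vertex, bound $|B\cap N[v]|$ (equivalently $|B\cap N(v)|$) via the open-packing condition at $v$, and in the equality case use that $V(G)\setminus N[v]\subseteq B$ together with the open-packing constraint at each $u\in N(v)$ to force $V(G)\setminus N[v]=\emptyset$ and $\deg(w)=1$. The only cosmetic difference is the order in which you extract the two conclusions: the paper first argues that $w$ is a pendant and then invokes connectedness for $\Delta(G)=n-1$, whereas you do the reverse.
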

\begin{proof}
Let $B$ be an open packing in the connected graph $G$ of maximum size and $u$ be a vertex of the maximum degree $‎\Delta(G)‎$. Then at most two vertices in $N[u]$ belong two $B$ and one of them must be $u$, necessarily. Thus,
\begin{equation}\label{EQ13}
\rho‎_{o}‎(G)=|B|‎\leq n-‎\Delta(G)‎‎‎+1.
\end{equation}
We now show that the equality in (\ref{EQ13}) holds if and only if $\Delta(G)=n-1$ and $\delta(G)=1$. Let the equality holds for the graph $G$. If $u$ is a vertex of the maximum degree $‎\Delta(G)‎$, then there exists two vertices in $N[u]‎\cap B‎$, otherwise $‎\rho‎_{o}(G)‎\leq n-‎\Delta(G)$ and this is a contradiction. On the other hand, by the definition of the open packing one of these two vertices is $u$ and the other must be a pendant vertex adjacent to $u$, necessarily. Moreover, $V(G)‎\setminus N[u]‎‎\subseteq B‎$. Now the connectedness of $G$ shows that $\Delta(G)=n-1$ and $\delta(G)=1$. Conversely, let $\Delta(G)=n-1$ and $\delta(G)=1$. Then every maximum packing in $G$ contains the vertex of the maximum degree and a pendant vertex. So, $\rho‎_{o}‎(G)=n-‎\Delta(G)‎‎‎+1$.
\end{proof}
We are now in a position to exhibit the solution to the problem.
\begin{theorem}
Let $G$ be a connected graph of order $n\geq3$. Then, $\rho_{o}(G)=n-\omega(G)$ if and only if $G\in \Pi_{1}$ for $\omega(G)=2$, and $G\in \Pi_{2}$ for $\omega(G)\geq3$.
\end{theorem}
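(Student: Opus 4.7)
The plan is to prove the two directions separately, with the forward implication carrying essentially all of the work. Throughout, I would let $S$ denote a maximum clique in $G$.

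\textbf{Backward direction ($G\in\Pi_1\cup\Pi_2\Rightarrow\rho_{o}(G)=n-\omega(G)$).} When $\omega(G)=2$, I would verify $\rho_{o}=n-2$ by direct inspection of the four graphs in $\Pi_1$. For family (a) with $\omega(G)\ge 3$, Lemma~\ref{Lem} applies immediately: the assumptions $\Delta(G)=n-1$ and $\delta(G)=1$ give $\rho_{o}(G)=n-\Delta(G)+1=2=n-\omega(G)$. For each of the families (b)--(j), the intended maximum open packing is essentially $V(G)\setminus S$ (together, in some of the families, with the marked vertex $x\in S$), and I would check the adjacency constraints case by case to confirm $|N(v)\cap B|\le 1$ for all $v\in V(G)$. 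Maximality in each case will follow from the general upper bound derived next.

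\textbf{Forward direction ($\rho_{o}(G)=n-\omega(G)\Rightarrow G\in\Pi_1\cup\Pi_2$).} The key observation I would start with is that, for any open packing $B$ and any clique $S$ with $|S|\ge 3$, one has $|B\cap S|\le 1$: if two vertices $u,v\in S$ both lay in $B$, then every third vertex $w\in S$ would satisfy $\{u,v\}\subseteq N(w)\cap B$, contradicting $|N(w)\cap B|\le 1$. For $\omega(G)\ge 3$ this already yields
\[
\rho_{o}(G)\le 1+|V(G)\setminus S|=n-\omega(G)+1,
\]
so the hypothesis $\rho_{o}(G)=n-\omega(G)$ says that any maximum open packing $B$ falls short of this natural bound by exactly one. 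I would then fix such a $B$ and split on whether $|B\cap S|$ equals $1$ or $0$, noting that in either case only a tiny (bounded) number of vertices of $V(G)\setminus S$ lie outside $B$. Writing out the open packing condition at each vertex of $S$ and each vertex of $V(G)\setminus S$ sharply constrains the adjacencies: most vertices of $S$ can have no neighbour in $V(G)\setminus S$, the induced subgraph $G[V(G)\setminus S]$ is $2$-independent or of one of a few specific shapes, and the remaining freedom is limited to the small local configurations recorded in (b)--(j). The case $\omega(G)=2$, in which the bound above is vacuous, I would handle separately by combining the triangle-free inequality from Section~2 with the connectedness hypothesis to whittle the possibilities down to $P_4$, $P_5$, $P_6$, $C_4$.

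\textbf{Main obstacle.} The principal difficulty is the forward case analysis. Several of the families (b)--(j) differ from one another by only a single edge or one auxiliary vertex (compare (e), (f), (g), or (h), (i)), so the bookkeeping has to track exactly which of the distinguished vertices $x,y,z,w,t$ lie in $B$, which edges between $S$ and $V(G)\setminus S$ may or must be present, and at the same time avoid creating a clique larger than $S$, which would contradict the maximality of $S$. Separating family (b), where $G[V(G)\setminus S]$ is $2$-independent, from families (c)--(j), where $G[V(G)\setminus S]$ is almost empty, is where I would expect the most delicate arguments to concentrate.
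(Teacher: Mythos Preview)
Your forward-direction plan differs from the paper's in its organizing principle. The paper does not split first on $|B\cap S|$; instead it applies Lemma~\ref{Lem} to pin down $\Delta(G)$: from $n-\omega(G)=\rho_o(G)\le n-\Delta(G)+1$ together with the trivial $\omega(G)-1\le\Delta(G)$ one gets $\Delta(G)\in\{\omega(G)-1,\omega(G),\omega(G)+1\}$. The value $\omega(G)-1$ is discarded ($G$ would be complete), the value $\omega(G)+1$ forces family~(a) via the equality clause of Lemma~\ref{Lem}, and only in the remaining case $\Delta(G)=\omega(G)$ does the paper split into the two subcases $V(G)\setminus B=S$ and $V(G)\setminus B=(S\setminus\{x\})\cup\{y\}$, which are exactly your $|B\cap S|=0$ and $|B\cap S|=1$.

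The missing $\Delta(G)$ step is not merely cosmetic. The clause common to the definitions of families (c)--(j) is that \emph{every} vertex of $S$---including the distinguished vertex $x$---has at most one neighbour in $V(G)\setminus S$; the paper reads this off immediately from $\Delta(G)=\omega(G)$. In your $|B\cap S|=1$ branch the open-packing condition at each $v\in S\setminus\{x\}$ does force $N(v)\cap(V(G)\setminus S)\subseteq\{y\}$ (since $x\in N(v)\cap B$ already), but at $x$ itself it only gives $|N(x)\cap B|\le 1$, which still allows $x$ to have one neighbour in $B$ \emph{and} be adjacent to $y$, so that $\deg(x)=\omega(G)+1$. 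Graphs where this happens are precisely those that must land in family~(a), yet your split does not isolate them: family~(a) sits inside your $|B\cap S|=1$ case rather than being carved off beforehand. You therefore need Lemma~\ref{Lem} (or an equivalent degree argument) in the forward direction as well, both to separate~(a) from (c)--(j) and to secure the ``at most one neighbour outside $S$'' condition those families require. (For $\omega(G)=2$ the paper does not invoke the triangle-free bound of Section~2 at all; it simply declares the verification routine.)
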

\begin{proof}
We distinguish two cases depending on the value of $\omega(G)$.\\
{\bf Case 1.} Let $\omega(G)=2$. It is a routine matter to see that $\rho_{o}(G)=n-2$ if and only if $G\in \Pi_{1}=\{P_{4},P_{5},P_{6},C_{4}\}$.\\
{\bf Case 2.} Let $\omega(G)\geq3$. Suppose that the equality holds. Clearly, $\omega(G)-1\leq \Delta(G)$. On the other hand, if $\Delta(G)\geq \omega(G)+2$ then $\rho_{o}(G)\leq n-\omega(G)-1$, by Lemma \ref{Lem}, and this is a contradiction. Therefore, $\Delta(G)\in\{\omega(G)-1,\omega(G),\omega(G)+1\}$.\\
If $\Delta(G)=\omega(G)-1$, then $G$ is a complete graph and hence $\rho_{o}(G)\neq n-\omega(G)$. Therefore, $\Delta(G)=\omega(G)$ or $\omega(G)+1$.\\
Let $\Delta(G)=\omega(G)+1$. Then, $\rho_{o}(G)\leq n-\omega(G)$ with equality if and only if $\Delta(G)=n-1$ and $\delta(G)=1$, by Lemma \ref{Lem}. In this case, $G$ belongs to the family (a).\\
Let $\Delta(G)=\omega(G)$. Let $S$ be a maximum clique in $G$. Since $S$ is a clique and $\rho_{o}(G)=n-|S|$, we deal with two possible subcases:\\
{\bf Subcase 2.1.} Let $V(G)\setminus B=S$. Then of the vertices in $V(G)\setminus S$ belong to $B$. Therefore the set $V(G)\setminus S$ is $2$-independent. Moreover, each vertex in $S$ is adjacent to at most one vertex in $V(G)\setminus S$. In this subcase, $G$ belongs to the family (b).\\
{\bf Subcase 2.2.} Let $V(G)\setminus B=(S\setminus\{x\})\cup\{y\}$, for some vertices $x\in S$ and $y\in V(G)\setminus S$. Since $V(G)\setminus(S\cup\{y\})\subseteq B$, then the vertices of the subgraph $H$ induced by this set is $2$-independent. Therefore, the components of $H$ are isolated vertices or copies of $P_{2}$. Moreover, $H$ has at most two components. For otherwise, if there exist $k\geq3$ components of $H$, then $x$ has a neighbor in at least $k-1$ components. Thus, $|N(x)\cap B|\geq k-1\geq2$, a contradiction. If $y$ has no neighbor in $V(H)$, then $y$ has all of its neighbors in $S$ and $H$ is an isolated vertex adjacent to $x$ or empty, necessarily, and we deal with all graphs in the families (c) or (d). If $y$ is adjacent to a vertex in a component $F$ of $H$, then either $y$ has a neighbor in $S\setminus\{x\}$ or $F$ is an isolated vertex adjacent to $x$ (which in this case $H$ has only one component). Considering the possible cases we can see that $G$ belongs to one of the families (e)--(j).\\
The above argument implies that $G\in \Pi_{2}$. On the other hand, it is easy to see that the equality holds for all graphs in $\Pi_{2}$. This completes the proof.
\end{proof}


\end{document}